\newtheorem{thm}{Theorem}[section]
\newtheorem{prop}[thm]{Proposition}
\theoremstyle{definition}
\newtheorem{defn}[thm]{Definition}
\newtheorem{example}[thm]{Example}
\theoremstyle{remark}
\newtheorem{rem}[thm]{Remark}
\numberwithin{equation}{section}
\begin{document}

\title[Singularities of discrete indefinite affine minimal surfaces]{Singularities of discrete indefinite affine minimal surfaces}%
\author{Marcos Craizer}%
\email{craizer@puc-rio.br \newline
Marcos Craizer is a professor in Pontifical Catholic University of Rio de Janeiro.}%

\thanks{The author wants to thank CNPq for financial support during the preparation of this manuscript.}%
\subjclass{53A70, 53A15}%
\keywords{Discrete Differential Geometry, Asymptotic Nets, Discrete Singularities}%

\begin{abstract}
A smooth affine minimal surface with indefinite metric can be obtained from a pair of smooth non-intersecting spatial curves by Lelieuvre's formulas. These surfaces may present singularities, which are gene\-rically cuspidal edges and swallowtails. By discretizing the initial curves, one can obtain by the discrete Lelieuvre's formulas a discrete affine minimal surface with inde\-finite metric. The aim of this paper is to define the singular edges and vertices of the corresponding discrete asymptotic net in such a way that the most relevant properties of the singular set of the smooth version remain valid.
\end{abstract}
\maketitle

\section{Introduction}

Discretization of a class of surfaces keeping some of the main properties
of the smooth version is the basic philosophy of Discrete Differential Geometry (\cite{Bobenko2008}). The discrete versions have the advantage of being easily represented in computers and can provide robust numerical methods for solving differential equations associated to the class.

Discrete asymptotic nets are natural nets for the discretization
of surfaces parameterized by asymptotic coordinates and have been the object of
recent and ancient research by many geometers, as one can see in the list of references of
this paper (\cite{Bobenko2008}, \cite{Craizer2010}, \cite{Rorig2014}, \cite{Kaferbock2013}). 
On the other hand, singularities of discrete nets is a recent topic of research (\cite{Rossman2018}). 

A smooth affine minimal surface with indefinite metric (IAMS) can be obtained from a pair of smooth non-intersecting curves in 
$\mathbb{R}^3$, $\alpha(u)$, $u\in I\subset \mathbb{R}$ and $\beta(v)$, $v\in J\subset \mathbb{R}$, by considering the co-normal vector field $\nu(u,v)=\alpha(u)-\beta(v)$ and Lelieuvre's formulas $f_u=\nu\times \nu_u$ and $f_v=-\nu\times\nu_v$.
The $(u,v)$ coordinates are then asymptotic and the Blaschke metric becomes $\Omega dudv$, where 
$\Omega=\left[\nu,\nu_u,\nu_v \right]$ (\cite{Nomizu1994}).
Such a surface present singularities at points $(u,v)$ such that $\Omega(u,v)=0$.
Generically, the singular set is a smooth curve and the admissible singularities are cuspidal edges and swallowtails.

By sampling the curves $\alpha$ e $\beta$, one obtain polygonal lines in $\mathbb{R}^3$. We shall keep calling these polygonal lines $\alpha$ and $\beta$. Defining discrete co-normals $\nu(u,v)=\alpha(u)-\beta(v)$, we can ontain a discrete asymptotic net by using the discrete Lelieuvre's formulas. Such a discrete asymptotic net is called a discrete affine minimal surface with indefinite metric (DIAMS). It can be shown that this discretization keeps some of the main properties of an IAMS (\cite{Craizer2010}).

In this paper, generalizing  the case of improper affine spheres (\cite{Craizer2023}), we propose definitions for the singularities of the DIAMS that also preserve the main properties of the generic singularities of an IAMS. These definitions are quite consistent, which is a remarkable fact, since it is not easy to have good definitions of singularities of discrete nets. For a discussion of this question, see \cite{Rossman2018}. For other classes of discrete surfaces with singularities, see \cite{Hoffmann2012} and \cite{Yasumoto2015}.

The discrete metric $\Omega$ can be defined at each quadrangle of the net. It is natural to define the singular edges as those whose metric at adjacent quadrangles changes the sign. We show that, under quite natural hypothesis, the singular set is a polygonal line, which can be thought as a discrete counterpart of a smooth curve. We also give strong arguments to, among the singular vertices, distinguish those who should be considered as cuspidal edges from those who should be considered swallowtails. Finally we provide some examples of DIAMS with bilinear interpolation, which visually reinforce the validity of our definitions.

The paper is organized as follows: In Section 2 we consider smooth affine minimal surfaces with indefinite metric, emphasizing the geometric conditions for singular points, and, among them, the swallowtail points. In section 3 we review the definitions concerning DIAMS without singularities. Section 4 deals with the singular edges and vertices of a DIAMS, and contains the main definitions and results of the paper.

\section{Smooth indefinite affine minimal surfaces with singularities}

\subsection{Indefinite affine minimal surfaces}

Consider a smooth immersion $f:U\subset\mathbb{R}^2\to\mathbb{R}^3$, where $U$ is an open subset of the plane. For 
$(u,v)\in U$, let
$$
L(u,v)=[f_u,f_v,f_{uu}], \ M(u,v)=[f_u,f_v,f_{uv}], \ N(u,v)=[f_u,f_v,f_{vv}],
$$
where $f_u (f_v)$ denotes the partial derivative of a function $f$ with respect to $u (v)$, and $[\cdot,\cdot,\cdot]$ denotes the determinant.
The surface $f$ is said to be non-degenerate if $LN-M^2\neq0$, and in this case, the Berwald-Blaschke metric is defined by
$ds^2=\frac{1}{|LN-M^2|^{1/4}}\left(L\,du^2+2M\,du\,dv+N\,dv^2\right).$ If $LN-M^2<0$, the metric is called \emph{indefinite} and the surface is locally hyperbolic, \emph{i.e.}, the tangent plane crosses the surface. In this paper
we shall assume that the affine surface has indefinite metric.
We may also assume, and we shall do it, that $(u,v)$ are \emph{asymptotic} coordinates, i.e.,  $L=N=0$. In this case, it is possible to take $M>0$, and the affine Blaschke metric takes the form $ds^2=2\Omega\,du\,dv$, where $\Omega^2=M$. The co-normal vector field $\eta$
is defined by
$$
\nu(u,v)=\frac{1}{\Omega}\left( f_u\times f_v\right).
$$
For more details, see \cite{Buchin1983}.

An immersion as above is an indefinite affine minimal surface (IAMS) if and only if 
$\nu_{uv}=0$, 
where $(u,v)$ are asymptotic coordinates (\cite{Nomizu1994}). Thus we can write 
\begin{equation}\label{Translation}
\nu(u,v)=\alpha(u)-\beta(v),
\end{equation}
for certain smooth non-intersecting curves $\alpha:I\to\mathbb{R}^3$ and $\beta:J\to\mathbb{R}^3$. 
Starting from the affine Blaschke co-normal map, one can recover the immersion using the smooth Leliuvre's formulas
\begin{equation*}\label{Lelieuvre}
f_u=\nu\times\nu_u,\ \ \ f_v=-\nu\times\nu_v,
\end{equation*}
where $\times$ denote the vector product in $\mathbb{R}^3$.
Using Equation \eqref{Translation}, we obtain that 
$\Omega(u,v)=\left[\nu,\nu_u,\nu_v\right]$ and so
\begin{equation}\label{eq:SmoothOmega}
\Omega(u,v)=\left[\alpha(u)-\beta(v),\alpha'(u),\beta'(v)\right]. 
\end{equation}
For a smooth surface without singularities, the sign of $\Omega$ does not change.

\subsection{Singularities}

Even if $\Omega$ changes sign, we shall call the surface obtained from Equation \eqref{Translation} an IAMS. In this context,  
$(u,v)\in I\times J$ is a singular point of the IAMS if and only if $\Omega(u,v)=0$. They correspond to the points where $f_u$ and $f_v$ are not linearly independent. 

For $(u_0,v_0)\in U$, fix a plane $\pi=\pi(u_0,v_0)$ transversal to $\alpha(u_0)-\beta(v_0)$ and denote by $P=P(u_0,v_0)$ the projection on $\pi$ parallel to
$\alpha(u_0)-\beta(v_0)$. 

\begin{prop}
The point $(u_0,v_0)\in U$ is singular if and only if $P\alpha'(u_0)$, $P\beta'(v_0)$ are parallel.
\end{prop}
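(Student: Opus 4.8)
The plan is to reduce the vanishing of $\Omega$ to a statement about the projected velocity vectors by exploiting the invariance of the determinant under adding multiples of one column to the others. Write $w=\alpha(u_0)-\beta(v_0)$, $a=\alpha'(u_0)$ and $b=\beta'(v_0)$, so that by Equation \eqref{eq:SmoothOmega} the point is singular precisely when $[w,a,b]=0$, i.e.\ when $w$, $a$, $b$ are linearly dependent.

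First I would record how $P$ acts. Since $P$ is the projection onto $\pi$ parallel to $w$, its kernel is the line spanned by $w$ and it fixes $\pi$ pointwise; hence there are scalars $\lambda,\mu$ with $Pa=a-\lambda w$ and $Pb=b-\mu w$. Using that the determinant is multilinear and alternating, I would then subtract the appropriate multiples of the first column from the second and third to obtain
\[
[w,a,b]=[w,a-\lambda w,b-\mu w]=[w,Pa,Pb],
\]
because the terms involving $w$ in the second and third slots produce columns proportional to the first and therefore contribute nothing. Thus $\Omega(u_0,v_0)=[w,Pa,Pb]$.

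Finally I would analyse $[w,Pa,Pb]$ using the transversality of $w$ to $\pi$. Since $Pa,Pb$ lie in $\pi$ and $w\notin\pi$, the three vectors $w,Pa,Pb$ are linearly dependent if and only if $Pa,Pb$ are linearly dependent as vectors of the two-dimensional plane $\pi$: if $Pa,Pb$ were independent they would span $\pi$, and adjoining the transversal vector $w$ would give a basis of $\mathbb{R}^3$; conversely if $Pa,Pb$ are parallel then all three vectors lie in the plane spanned by $w$ and their common direction. Combining this with the identity of the previous paragraph shows that $\Omega(u_0,v_0)=0$ exactly when $P\alpha'(u_0)$ and $P\beta'(v_0)$ are parallel, which is the claim.

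The computations are elementary, so there is no serious obstacle; the only point requiring a little care is the last step, where \emph{parallel} must be read in the linear-algebra sense of linearly dependent, so as to include the degenerate cases in which one of the projected vectors vanishes. It is also worth noting that the conclusion is independent of the particular choice of transversal plane $\pi$ and of the projection direction, since the identity $[w,a,b]=[w,Pa,Pb]$ holds for every such $P$.
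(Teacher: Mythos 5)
Your proof is correct and follows essentially the same route as the paper's: replace $\alpha'(u_0)$, $\beta'(v_0)$ by their projections inside the determinant $\Omega=[\alpha(u_0)-\beta(v_0),\alpha'(u_0),\beta'(v_0)]$ (which the paper does implicitly, while you justify it by column operations), and then observe that with the transversal vector in the first slot the determinant vanishes exactly when the two projected vectors in $\pi$ are linearly dependent, which the paper phrases as $P\alpha'(u_0)\times P\beta'(v_0)=0$. You merely spell out steps the paper leaves tacit, including the degenerate case of a vanishing projected vector.
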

\begin{proof}
A point $(u_0,v_0)\in U$ is singular if and only if 
$$
[\alpha(u_0)-\beta(v_0),P\alpha'(u_0),P\beta'(v_0)]=0,
$$
which is equivalent
to $P\alpha'(u_0)\times P\beta'(v_0)=0$. 
\end{proof}

It is easy to see that the singular set is a regular curve except at points 
$(u,v)\in I\times J$ such that
\begin{equation}\label{eq:NonRegularSingularCurve}
P\alpha'(u)\times P\beta'(v)=P\alpha''(u)\times P\beta'(v)=P\alpha'(u)\times P\beta''(v)=0.
\end{equation}
This is a co-dimension $3$ condition in the space of $2$-jets of $(\alpha(u),\beta(u))$. Thus, for generic
$(\alpha,\beta)\in C^{\infty}(I,\mathbb{R}^3)\times C^{\infty}(J,\mathbb{R}^3)$, condition \eqref{eq:NonRegularSingularCurve} do not occur. We conclude that the singular set is generically a smooth regular curve.

At a point $(u_0,v_0)\in S$, denote by $\eta=\eta(u_0,v_0)$ a vector in the kernel of $df(u_0,v_0)$. If $f_u+\lambda f_v=0$, then we can write $\eta=(1,\lambda)$. We shall distinguish the cases $\eta$ transversal to $S$ and $\eta$ tangent to $S$.
Next proposition says that generically, for most points of $S$, a singular point of an affine minimal surface with indefinite metric is a cuspidal edge, but, for some isolated points, it is a swallowtail.

\begin{prop}\label{prop:SmoothSingularities}(\cite{Kokubu2005}) Let $(u_0,v_0)\in S$ and $\eta=\eta(u_0,v_0)$ a vector in the kernel of $df(u_0,v_0)$. Then
\begin{enumerate}
\item  
If $\eta$ is transversal to $S$, the singularity is a cuspidal edge.

\item 
If $\eta$ is tangent to $S$, the singularity is generically a swallowtail.
\end{enumerate}
\end{prop}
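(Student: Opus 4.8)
The plan is to realize $f$ as a wave front carrying the unit normal $N=\nu/\abs{\nu}$ and then invoke the cuspidal-edge and swallowtail criteria of \cite{Kokubu2005}. The first step is to check that $\nu$ is a Euclidean normal field: from Lelieuvre's formulas, $\nu\cdot f_u=\nu\cdot(\nu\times\nu_u)=0$ and $\nu\cdot f_v=0$, and since $\alpha$ and $\beta$ are non-intersecting we have $\nu=\alpha-\beta\neq0$, so $N$ is a well-defined unit normal and $(f,N)$ is a frontal. I would then verify the genuine front condition, namely that the Legendrian lift $(f,N)$ is an immersion, using that $(f_u,N_u)$ and $(f_v,N_v)$ never vanish by the regularity of $\alpha$ and $\beta$. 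The signed area density is $\lambda=[f_u,f_v,N]=(f_u\times f_v)\cdot N$; computing $f_u\times f_v=-[\nu,\nu_u,\nu_v]\,\nu=-\Omega\,\nu$, one obtains $\lambda=-\abs{\nu}\,\Omega$. Hence the singular set of the front coincides with $S=\{\Omega=0\}$, and because $\abs{\nu}>0$, at a zero of $\Omega$ we have $d\lambda=-\abs{\nu}\,d\Omega$, so nondegeneracy of a singular point and all the directional-derivative tests below can be read off $\Omega$ up to the positive factor $\abs{\nu}$.

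Next I would exhibit the null direction. At $(u_0,v_0)\in S$ the vectors $f_u=\nu\times\nu_u$ and $f_v=-\nu\times\nu_v$ are parallel, and $f_u+\lambda f_v=0$ is equivalent to $\nu\times(\nu_u-\lambda\nu_v)=0$, i.e.\ $\nu_u-\lambda\nu_v$ is parallel to $\nu$. Projecting by $P$ along $\nu$, this says that $P\alpha'(u_0)$ and $P\beta'(v_0)$ are parallel, which recovers the preceding proposition and simultaneously determines $\eta=(1,\lambda)$. Extending $\eta$ to a smooth vector field along $S$, the criteria of \cite{Kokubu2005} apply at the nondegenerate singular points, which by the discussion preceding the statement generically form the regular curve $S$ (the degenerate locus being the codimension-$3$ condition \eqref{eq:NonRegularSingularCurve}, which is excluded).

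For part (1), the cuspidal-edge criterion states that a nondegenerate singular point is a cuspidal edge exactly when $\eta\lambda(u_0,v_0)\neq0$. Since $d\lambda=-\abs{\nu}\,d\Omega$ on $S$, this is equivalent to $d\Omega(\eta)\neq0$, i.e.\ $\eta\notin\ker d\Omega=T_{(u_0,v_0)}S$, which is precisely the transversality of $\eta$ to $S$. For part (2), when $\eta$ is tangent to $S$ we have $\eta\lambda(u_0,v_0)=0$, and the swallowtail criterion requires in addition the first-order nondegeneracy $\tfrac{d}{dt}\big(\eta\lambda\big)(\gamma(t))\big|_{t=0}\neq0$ for a parametrization $\gamma$ of $S$ with $\gamma(0)=(u_0,v_0)$. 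This is a single scalar condition, so at the isolated parameters where the null direction becomes tangent to $S$ it holds for generic $(\alpha,\beta)$, yielding swallowtails.

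The main obstacle I anticipate is twofold. First, the honest verification of the front (not merely frontal) condition and the careful bookkeeping needed to transport the criteria of \cite{Kokubu2005}, which are stated for unit-normal fronts, into the affine formulation in terms of $\Omega$; one must track the factor $\abs{\nu}$ to confirm that it never affects the vanishing tests at points of $S$. Second, making the word \emph{generically} precise in part (2): this amounts to expressing the first-order nondegeneracy as a transversality condition on the $2$-jets of $(\alpha,\beta)$ and checking that, together with the tangency $\eta\in T_{(u_0,v_0)}S$, it defines an open and dense condition, complementary to the degeneracy \eqref{eq:NonRegularSingularCurve} already excluded.
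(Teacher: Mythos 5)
The paper offers no internal proof of this proposition: it is quoted directly from \cite{Kokubu2005}, so there is no argument of the paper to compare against, and your reconstruction follows precisely the route that citation presupposes. Its core computations are correct: $f_u\times f_v=-\left[\nu,\nu_u,\nu_v\right]\nu=-\Omega\,\nu$, hence the signed area density of the front is $-\abs{\nu}\,\Omega$ and the front's singular set is $S=\set{\Omega=0}$; the null direction $\eta=(1,\lambda)$ with $P\alpha'+\lambda P\beta'=0$ agrees with the paper's Equation \eqref{eq:DefineLambda}; and since $\Omega_u=[\alpha-\beta,\alpha'',\beta']$ and $\Omega_v=[\alpha-\beta,\alpha',\beta'']$, the nondegeneracy $d\lambda\neq0$ required by the criteria of \cite{Kokubu2005} is exactly the complement of the paper's codimension-$3$ condition \eqref{eq:NonRegularSingularCurve}. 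Your rephrasing of the swallowtail criterion as $\tfrac{d}{dt}(\eta\lambda)(\gamma(t))\vert_{t=0}\neq0$ is equivalent, at nondegenerate points, to the usual condition $\tfrac{d}{dt}\det(\gamma'(t),\eta(\gamma(t)))\vert_{t=0}\neq0$, so parts (1) and (2) do follow once the front condition is in place.

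That front condition is the one genuine soft spot. Your proposed verification --- that $(f_u,N_u)$ and $(f_v,N_v)$ ``never vanish by the regularity of $\alpha$ and $\beta$'' --- is neither sufficient nor correct as stated: the Legendrian lift is an immersion only if the two pairs are linearly independent in $\R^6$ (at a singular point $f_u\parallel f_v$, so everything hinges on the normal components), and regularity of $\alpha$ does not prevent vanishing, since if $\alpha'(u)$ is parallel to $\nu(u,v)$ then $f_u=\nu\times\nu_u=0$ and $N_u=0$ simultaneously even though $\alpha'\neq0$. The correct check is $dN(\eta)\neq0$ along $S$: since $d\nu(\eta)=\alpha'-\lambda\beta'$ and $P\alpha'=-\lambda P\beta'$ on $S$, one gets $P\,d\nu(\eta)=2P\alpha'$, so the front condition at a singular point is precisely $P\alpha'(u_0)\neq0$ (equivalently $P\beta'(v_0)\neq0$, and $\lambda\neq0,\infty$). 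This fails on a codimension-$2$ set that is \emph{not} excluded by \eqref{eq:NonRegularSingularCurve}, so it must be added explicitly to the genericity hypotheses; for generic $(\alpha,\beta)$ it only removes isolated points of $S$, so the statement survives. A minor slip: the swallowtail nondegeneracy $\tfrac{d}{dt}(\eta\lambda)\neq0$ involves third derivatives of $(\alpha,\beta)$, hence is a transversality condition on $3$-jets rather than $2$-jets; this does not affect the genericity conclusion.
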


\subsection{Geometric conditions for singularities}

We now describe some geometric conditions that, among singular points of a generic IAMS, distinguish cuspidal edges from swallowtails. As we shall see in next sections, some of these geometric conditions have natural discrete counterparts and so they will allow us to distinguish the singular vertices of a DIAMS.

Fix a singular point $(u_0,v_0)\in U$ and assume that $\alpha'(u_0)\neq 0$. We may write the singular curve in a neighborhood of 
$(u_0,v_0)$ as $v=v(u)$, $u_0-\epsilon\leq u\leq u_0+\epsilon$. For each $u$, write 
\begin{equation}\label{eq:DefineLambda}
P\alpha'(u)+\lambda P\beta'(u)=0,
\end{equation}
for some $\lambda=\lambda(u)\in\mathbb{R}$. It is clear that $\eta(u)=(1,\lambda(u))$ is in the kernel of $df(u,v(u))$, for any
$u_0-\epsilon\leq u\leq u_0+\epsilon$. Denote by $R:\pi\to\pi$ the reflection with respect to the origin. It is clear that the curves $P\alpha(u)$ and $RP\beta(v(u))$ touches at $u=u_0$.

\begin{prop}\label{prop:SmoothSwallow}
Let $(u_0,v_0)$ be a singular point of a generic IAMS. Then the following conditions are equivalent:

\begin{enumerate}
\item\label{item:Swallow} 
The point $(u_0,v_0)$ is a swallowtail.

\item\label{item:LambdaEqualsVPrime}
$\lambda(u_0)=\tfrac{dv}{du}(u_0)$.

\item\label{item:EqualCurvature} 
The curves $P\alpha(u)$ and $RP\beta(v(u))$ have contact of order $2$ at $u=u_0$.

\item\label{item:Crossing} 
The curves $P\alpha(u)$ and $RP\beta(v(u))$ crosses at $u=u_0$.

\end{enumerate}

\end{prop}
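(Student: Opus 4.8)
The plan is to prove the cycle of equivalences by anchoring every condition to two scalars evaluated at $(u_0,v_0)$: the slope $\lambda_0:=\lambda(u_0)$ of the kernel direction and the slope $v'(u_0)$ of the singular curve $S$. First I would dispose of \eqref{item:Swallow}$\Leftrightarrow$\eqref{item:LambdaEqualsVPrime} using Proposition~\ref{prop:SmoothSingularities}: the kernel of $df(u_0,v_0)$ is spanned by $\eta=(1,\lambda_0)$, while $S$ is tangent to $(1,v'(u_0))$, so $\eta$ is tangent to $S$ exactly when $\lambda_0=v'(u_0)$; by Proposition~\ref{prop:SmoothSingularities} and the genericity hypothesis this is equivalent to $(u_0,v_0)$ being a swallowtail. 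Throughout I would work at the base point, where the fixed projection $P=P(u_0,v_0)$ coincides with the natural projection, and fix an adapted frame of $\pi$ with tangent vector $b:=P\beta'(v_0)$ and unit normal $n\perp b$; from \eqref{eq:DefineLambda} one has $P\alpha'(u_0)=-\lambda_0 b$, which already exhibits the common tangency of the two plane curves.

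The analytic heart is to compute $v'(u_0)$. Differentiating $\Omega(u,v(u))=0$ and using $\Omega_u=[\nu,\alpha'',\beta']$ and $\Omega_v=[\nu,\alpha',\beta'']$ (obtained from \eqref{eq:SmoothOmega}, the remaining terms carrying a repeated column) gives $v'(u_0)=-\Omega_u/\Omega_v$. At $(u_0,v_0)$ we have $\nu=\alpha(u_0)-\beta(v_0)$, so each determinant $[\nu,Pw_1,Pw_2]$ equals a fixed nonzero multiple of the planar determinant $Pw_1\times Pw_2$; substituting $P\beta'(v_0)=b$ and $P\alpha'(u_0)=-\lambda_0 b$ and reading off normal components yields, up to the common constant, $\Omega_u\propto\langle P\alpha''(u_0),n\rangle$ and $\Omega_v\propto\lambda_0\langle P\beta''(v_0),n\rangle$, hence
\begin{equation*}
 v'(u_0)=-\frac{\langle P\alpha''(u_0),n\rangle}{\lambda_0\,\langle P\beta''(v_0),n\rangle}.
\end{equation*}

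Next I would write the two plane curves $A(u)=P\alpha(u)$ and $B(u)=RP\beta(v(u))=-P\beta(v(u))$ as graphs $y=f(x)$, $y=g(x)$ over the common tangent line (the $b$-axis), legitimate since $A_x'(u_0)=-\lambda_0\norm{b}\neq0$ generically. A direct computation of the graph second derivatives gives $f''(0)=\langle P\alpha''(u_0),n\rangle/(\lambda_0^2\norm{b}^2)$ and $g''(0)=-\langle P\beta''(v_0),n\rangle/\norm{b}^2$, so the contact-order-two condition $f''(0)=g''(0)$ reads $\langle P\alpha''(u_0),n\rangle=-\lambda_0^2\langle P\beta''(v_0),n\rangle$. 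Comparing with the displayed formula for $v'(u_0)$, this is precisely $\lambda_0=v'(u_0)$, establishing \eqref{item:LambdaEqualsVPrime}$\Leftrightarrow$\eqref{item:EqualCurvature}. Finally, for \eqref{item:EqualCurvature}$\Leftrightarrow$\eqref{item:Crossing} I would analyse $h=f-g$: since both curves are tangent, $h(0)=h'(0)=0$, so $h$ changes sign at $0$ (the curves cross) iff $h''(0)=0$, i.e.\ iff the contact order is even; genericity forces the order to be exactly two with nonzero cubic coefficient, giving crossing $\Leftrightarrow$ contact of order two.

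The main obstacles I anticipate are bookkeeping rather than conceptual: correctly reducing the spatial determinants $\Omega_u,\Omega_v$ to planar determinants through the single projection that is valid only at the base point, and tracking the signs and the normalizing constants $\norm{b}$, $\lambda_0$ so that the curvature-matching condition lines up exactly with $\lambda_0=v'(u_0)$ and not merely up to sign. I would also need the genericity hypothesis at several places: to guarantee $\lambda_0\neq0$ and $\langle P\beta''(v_0),n\rangle\neq0$ so the denominators make sense, to exclude the degenerate jets of \eqref{eq:NonRegularSingularCurve}, and to ensure the cubic term of $h$ does not vanish, so that the order-two contact is a genuine transversal crossing.
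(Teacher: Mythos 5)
Your proposal is correct and takes essentially the same route as the paper: the equivalence \eqref{item:Swallow}$\Leftrightarrow$\eqref{item:LambdaEqualsVPrime} via Proposition~\ref{prop:SmoothSingularities}, a second-order computation at $(u_0,v_0)$ identifying $\lambda(u_0)=\tfrac{dv}{du}(u_0)$ with equality of the curvatures of $P\alpha$ and $RP\beta$, and the parity-of-contact argument for \eqref{item:EqualCurvature}$\Leftrightarrow$\eqref{item:Crossing}, which the paper leaves as ``obvious.'' Indeed, your identity $v'(u_0)=-\langle P\alpha''(u_0),n\rangle/\bigl(\lambda_0\langle P\beta''(v_0),n\rangle\bigr)$, obtained by implicit differentiation of $\Omega(u,v(u))=0$, is precisely the relation $[P\alpha',P\alpha'']=\lambda^2\,[P\beta',P\beta'']\,\tfrac{dv}{du}$ that the paper derives by differentiating \eqref{eq:DefineLambda} along the singular curve, so the two computations coincide up to sign bookkeeping.
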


\begin{proof}
Since $(1,\tfrac{dv}{du})$ is tangent to the singular curve, we conclude that $\eta$ is tangent to the singular curve if and only if $\lambda=\tfrac{dv}{du}$. Thus the equivalence between itens \ref{item:Swallow} and \ref{item:LambdaEqualsVPrime} follows from Proposition \ref{prop:SmoothSingularities}.
Differentiate Equation \eqref{eq:DefineLambda} with respect to $u$ and take $u=u_0$ to obtain 
$$
[P\alpha'(u_0),P\alpha''(u_0)]=\lambda^2(u_0)[P\beta'(v_0),P\beta''(v_0)]\tfrac{dv}{du}(u_0).
$$
Thus $\lambda(u_0)=\tfrac{dv}{du}(u_0)$ if and only if
$$
[P\alpha'(u_0),P\alpha''(u_0)]=\lambda^3(u_0)[P\beta'(v_0),P\beta''(v_0)].
$$
We may assume w.l.o.g. that $\lambda(u_0)>0$. In this case, $P\alpha'(u_0)$ and $P\beta'(v_0)$ are pointing to opposite directions and the above equation becomes
$$
\frac{[P\alpha'(u_0),P\alpha''(u_0)]}{|P\alpha'(u_0)|^3}=\frac{[P\beta'(v_0),P\beta''(v_0)]}{|P\alpha'(u_0)|^3}.
$$
This is equivalent to say that $P\alpha$ and $RP\beta$ have the same orientation and the same signed curvature at $(u_0,v_0)$. We conclude that itens \ref{item:LambdaEqualsVPrime} and \ref{item:EqualCurvature} are equivalent. 
The equivalence between itens \ref{item:EqualCurvature} and \ref{item:Crossing} is obvious.
\end{proof}

\section{Discrete indefinite affine minimal surfaces}

Given a function $g:U\subset\mathbb{Z}^2\to\mathbb{R}^k$, we denote its discrete derivatives by
$$
g_u(u+\tfrac{1}{2},v)=g(u+1,v)-g(u,v),\ \ \ g_v(u,v+\tfrac{1}{2})=g(u,v+1)-g(u,v).
$$
The second mixed discrete derivative is then given by
$$
g_{uv}(u+\tfrac{1}{2},v+\tfrac{1}{2})=g(u+1,v+1)+g(u,v)-g(u+1,v)-g(u,v+1).
$$

\subsection{Asymptotic nets, Lelieuvre normals and affine minimal surfaces}

Given a map $f:U\subset\mathbb{Z}^2\to\mathbb{R}^3$ and $(u,v)\in U$, we call the vertices $f(u\pm 1,v)$, 
$f(u, v\pm 1)$ and $f(u,v)$ together with the edges connecting them the {\it star} of $(u,v)$. 
The map $f$ is called a {\it discrete asymptotic net} if the star of $(u,v)$ is planar, for any $(u,v)\in U$.

Given a discrete asymptotic net $f$, we say that $\nu:U\subset\mathbb{Z}^2\to\mathbb{R}^3$ is a {\it Lelieuvre 
normal vector field} associated to $f$ if $\nu(u,v)$ is normal to the star plane at $(u,v)$ and the discrete Lelieuvre's equations 
\begin{equation*}\label{DiscreteLelieuvre}
f_u(u+\tfrac{1}{2},v)=\nu(u,v)\times \nu_u(u+\tfrac{1}{2},v),\ \ f_v(u,v+\tfrac{1}{2})=-\nu(u,v)\times \nu_v(u,v+\tfrac{1}{2})
\end{equation*}
hold (\cite{Bobenko2008}). 
It is well-known that, given a normal vector $\nu_0$ at a fixed point $(u_0,v_0)\in U$, there exists a unique Lelieuvre normal vector field $\nu:U\subset\mathbb{Z}^2\to\mathbb{R}^3$ such that $\nu(u_0,v_0)=\nu_0$. If we multiply $\nu_0$ by a positive real number $\alpha$, then $\nu(u,v)$ will be multiplied by $\alpha$ or $\alpha^{-1}$, according to the relative parity  
of $u+v$ with respect to $u_0+v_0$ (black-white rescaling).

We say that $\nu:U\subset\mathbb{Z}^2\to\mathbb{R}^3$ is a {\it Moutard net} if 
\begin{equation*}\label{Moutard}
\nu(u+1,v+1)+\nu(u,v)=\rho(u+\tfrac{1}{2},v+\tfrac{1}{2})\left( \nu(u,v+1)+\nu(u+1,v) \right),
\end{equation*}
for a certain scalar function $\rho$.
Any Lelieuvre normal vector field of an asymptotic net is a Moutard net. Conversely, given a Moutard net $\nu$, there exist an asymptotic net, unique up to translations, such that $\nu$ is one of its Lelieuvre normal vector fields (\cite{Bobenko2008}).

An {indefinite discrete affine minimal surface} (DIAMS) is a discrete asymptotic net that admits a Lelieuvre normal vector field $\nu$ satisfying 
$\nu_{uv}=0$, or equivalently $\rho=1$ (\cite{Craizer2010}).
It is clear that for a DIAMS we can write 
\begin{equation}\label{eq:NormalMinimal}
\nu(u,v)=\alpha(u)-\beta(v)
\end{equation}
for certain polygonal lines $\alpha(u)$ and $\beta(v)$, and conversely, given polygonal lines $\alpha(u)$ and $\beta(v)$,
Equation \eqref{eq:NormalMinimal} defines a Lelieuvre normal of a DIAMS.

In this work, we shall need the following generic hypothesis on the polygonal lines $\alpha$ and $\beta$: Given $u\in I$, $v\in J$, consider the line through $\alpha(u)$ and $\beta(v)$ and then the four planes determined by this line and $\alpha(u\pm 1)$, $\beta(v\pm 1)$. We shall assume that, for any pair $(u,v)\in I\times J$, these four planes are distinct.

Define, similarly to the smooth case,
\begin{equation*}
M\left(u+\tfrac{1}{2},v+\tfrac{1}{2}\right)=\left[f_u\left(u+\tfrac{1}{2},v\right),f_v\left(u,v+\tfrac{1}{2}\right),f_{uv}\left(u+\tfrac{1}{2},v+\tfrac{1}{2}\right)\right].
\end{equation*}
One can verify that 
\begin{equation*}
M(u+\tfrac{1}{2},v+\tfrac{1}{2})=\Omega^2(u+\tfrac{1}{2},v+\tfrac{1}{2}),
\end{equation*}
where
$$
\Omega\left(u+\tfrac{1}{2},v+\tfrac{1}{2}\right)=\left[\nu(u,v),\nu_u\left(u+\tfrac{1}{2},v\right),\nu_v\left(u,v+\tfrac{1}{2}\right)\right].
$$
So
\begin{equation}\label{eq:DiscreteOmega}
\Omega(u+\tfrac{1}{2},v+\tfrac{1}{2})=\left[\alpha(u)-\beta(v),\alpha'(u+\tfrac{1}{2}),\beta'(v+\tfrac{1}{2})\right],
\end{equation}
which corresponds to the discrete counterpart of Equation \eqref{eq:SmoothOmega}.

\subsection{Bilinear interpolation}

Consider a discrete asymptotic net. 
For a quadrangle $(u+\tfrac{1}{2},v+\tfrac{1}{2})$, there exists a one parameter family of quadrics 
$Q(u+\tfrac{1}{2},v+\tfrac{1}{2})$ that passes through the edges of the quadrangle. We shall refer to them as
{\it interpolating} quadrics. Two quadrics defined at adjacent quadrilaterals are called {\it compatible} 
if their tangent planes coincide at their common edge. If we assume that $U$ is simply connected, given an interpolating quadric $Q_0$ at $(u_0+\tfrac{1}{2},v_0+\tfrac{1}{2})$, there exists a unique collection of compatible interpolating quadrics $Q(u+\tfrac{1}{2},v+\tfrac{1}{2})$, $(u,v)\in U$, such that $Q(u_0+\tfrac{1}{2},v_0+\tfrac{1}{2})=Q_0$ (\cite{Rorig2014}).
We shall refer to such a collection as a {\it field of compatible interpolating quadrics}. Observe that such a field depends on the choice of a initial quadric $Q_0$. 


In the case of DIAMS, there exists a natural choice 
for the field of interpolating quadrics. In fact, one can verify that a discrete asymptotic net is a DIAMS if and only if there exists a field of compatible interpolating quadrics consisting of hyperbolic paraboloids (\cite{Craizer2010}). In this case, the paraboloids are obtained by bilinear interpolation of the vertices.
We shall use these interpolations to provide figures of the discrete asymptotic net that look like $C^1$ smooth surfaces. 

\section{Singularities of a DIAMS}

\subsection{The singular set as a polygonal line}

Consider an edge $(u_0,v_0+\tfrac{1}{2})$ of a DIAMS. We say that this edge is a {\it singular edge} if 
\begin{equation}\label{eq:SingularvEdge}
\Omega(u_0-\tfrac{1}{2},v_0+\tfrac{1}{2})\cdot \Omega(u_0+\tfrac{1}{2},v_0+\tfrac{1}{2})<0.
\end{equation}
Similarly, an edge $(u_0+\tfrac{1}{2},v_0)$ is a {\it singular edge} if 
\begin{equation}\label{eq:SingularuEdge}
\Omega(u_0+\tfrac{1}{2},v_0-\tfrac{1}{2})\cdot \Omega(u_0+\tfrac{1}{2},v_0+\tfrac{1}{2})<0.
\end{equation}
A vertex which is adjacent to 
a singular edge will be called a {\it singular vertex}. The singular set is the union of singular edges and vertices.

Fix a singular vertex $(u_0,v_0)$ and, as in the smooth case, consider a plane $\pi=\pi(u_0,v_0)$ transversal to $\alpha(u_0)-\beta(v_0)$. Denote by $P$ the projection on $\pi$ parallel to $\alpha(u_0)-\beta(v_0)$.

\begin{prop}
An edge $(u_0,v_0+\tfrac{1}{2})$ is singular if and only if the sides $P\alpha'(u_0\pm\tfrac{1}{2})$ are in the same half-plane of $\pi$ determined by the line $tP\beta'(v_0+\tfrac{1}{2})$, $t\in\mathbb{R}$. Similar conditions hold for the edges
$(u_0,v_0-\tfrac{1}{2})$ and $(u_0\pm\tfrac{1}{2},v_0)$. 
\end{prop}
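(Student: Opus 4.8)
The plan is to reduce the two discrete metrics appearing in \eqref{eq:SingularvEdge} to determinants that share a common first vector, and then to read off their signs geometrically through the projection $P$. Set $w=\alpha(u_0)-\beta(v_0)$, the direction along which $P$ projects. From \eqref{eq:DiscreteOmega} one gets directly $\Omega(u_0+\tfrac12,v_0+\tfrac12)=[w,\alpha'(u_0+\tfrac12),\beta'(v_0+\tfrac12)]$, while $\Omega(u_0-\tfrac12,v_0+\tfrac12)=[\alpha(u_0-1)-\beta(v_0),\alpha'(u_0-\tfrac12),\beta'(v_0+\tfrac12)]$. First I would rewrite $\alpha(u_0-1)=\alpha(u_0)-\alpha'(u_0-\tfrac12)$ and use multilinearity together with the vanishing of a determinant having a repeated column; the term containing $\alpha'(u_0-\tfrac12)$ twice drops out, leaving $\Omega(u_0-\tfrac12,v_0+\tfrac12)=[w,\alpha'(u_0-\tfrac12),\beta'(v_0+\tfrac12)]$. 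Thus both metrics now carry $w$ in the first slot. This small cancellation is the one genuinely algebraic step.

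Next I would use that $P$ projects parallel to $w$, so $x-Px\in\mathbb{R}\,w$ for every vector $x$; expanding $[w,x,y]$ and discarding every term with a repeated $w$ yields the identity $[w,x,y]=[w,Px,Py]$. Applying it to both metrics gives $\Omega(u_0\pm\tfrac12,v_0+\tfrac12)=[w,P\alpha'(u_0\pm\tfrac12),P\beta'(v_0+\tfrac12)]$, expressions living entirely in $\pi$. Orienting $\pi$ so that $[w,\cdot,\cdot]$ is a positive multiple of its area form, the sign of $[w,x,y]$ for $x,y\in\pi$ records on which of the two half-planes bounded by the line $\mathbb{R}\,y$ the vector $x$ lies. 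I would also invoke the genericity hypothesis that the four planes through the line $\alpha(u_0)\beta(v_0)$ are distinct: for the edge at hand it says exactly that $P\alpha'(u_0\pm\tfrac12)$ are not collinear with $P\beta'(v_0+\tfrac12)$, so neither metric vanishes and the notion of half-plane is unambiguous.

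Finally, condition \eqref{eq:SingularvEdge} asks that the two metrics have opposite signs, i.e.\ that $P\alpha'(u_0-\tfrac12)$ and $P\alpha'(u_0+\tfrac12)$ lie on opposite half-planes of the line $t\,P\beta'(v_0+\tfrac12)$. To match the wording of the statement I would note that $Pw=0$ forces $P\alpha(u_0)=P\beta(v_0)=:p_0$, so the two sides of the projected polygon $P\alpha$ emanating from $p_0$ point along $-P\alpha'(u_0-\tfrac12)$ and $P\alpha'(u_0+\tfrac12)$; these sides lie in the same half-plane precisely when the two forward differences lie on opposite half-planes, which is the singular condition. The remaining cases $(u_0,v_0-\tfrac12)$ and $(u_0\pm\tfrac12,v_0)$ follow by the same argument after exchanging the roles of $\alpha,\beta$ and of $u,v$ and shifting indices. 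I expect the main obstacle to be precisely this orientation bookkeeping --- keeping track of the direction of the incoming edge so that ``opposite half-planes for the forward differences'' becomes ``same half-plane for the geometric sides'' --- rather than any hard estimate.
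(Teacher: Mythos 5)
Your proof is correct and takes essentially the same route as the paper: the paper's one-line proof asserts the identity $\Omega(u_0\pm\tfrac{1}{2},v_0+\tfrac{1}{2})=\left[\alpha(u_0)-\beta(v_0),P\alpha'(u_0\pm\tfrac{1}{2}),P\beta'(v_0+\tfrac{1}{2})\right]$ and concludes that singularity means the two pairs induce opposite orientations of $\pi$. You merely spell out the steps the paper leaves implicit --- the multilinearity cancellation replacing $\alpha(u_0-1)-\beta(v_0)$ by $\alpha(u_0)-\beta(v_0)$, the projection identity $[w,x,y]=[w,Px,Py]$, the role of the four-planes genericity hypothesis, and the sign bookkeeping converting ``forward differences in opposite half-planes'' into ``geometric sides in the same half-plane'' --- all of which are accurate.
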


\begin{proof}
From Equation \eqref{eq:DiscreteOmega}, we have that
$$
\Omega(u_0\pm\tfrac{1}{2},v_0+\tfrac{1}{2})=\left[ \alpha(u_0)-\beta(v_0), P\alpha'(u_0\pm\tfrac{1}{2}),P\beta'(v_0+\tfrac{1}{2})  \right].
$$
Thus the edge $(u_0,v_0+\tfrac{1}{2})$ is singular if and only if $P\alpha'(u_0+\tfrac{1}{2})$ and $P\alpha'(u_0-\tfrac{1}{2})$
determine, together with $P\beta'(v_0+\tfrac{1}{2})$, different orientations for the plane $\pi$.
\end{proof}

We need to make one more assumption before proceeding with our analysis of singular vertices:
Choose $\alpha(u_0)$, $\beta(v_0)$, $\alpha(u_0\pm 1)$ and $\beta(v_0+1)$  such that the edge $(u_0,v_0+\tfrac{1}{2})$ is singular. In the plane $\pi$, consider the lines through the origin parallel to $P\alpha'(u_0\pm\tfrac{1}{2})$, which divide $\pi$ in four quadrants. We shall assume that $P\beta(v_0-1)$ is not in the quadrant containing $P\beta(v_0+\tfrac{1}{2})$ (see Figure \ref{Fig:DIAMSForbidden}). 
This hypothesis is quite natural if we think that $\alpha$ and $\beta$ are discretizations of smooth regular curves.

\begin{figure}[!htb]
\includegraphics[width=.5\linewidth]{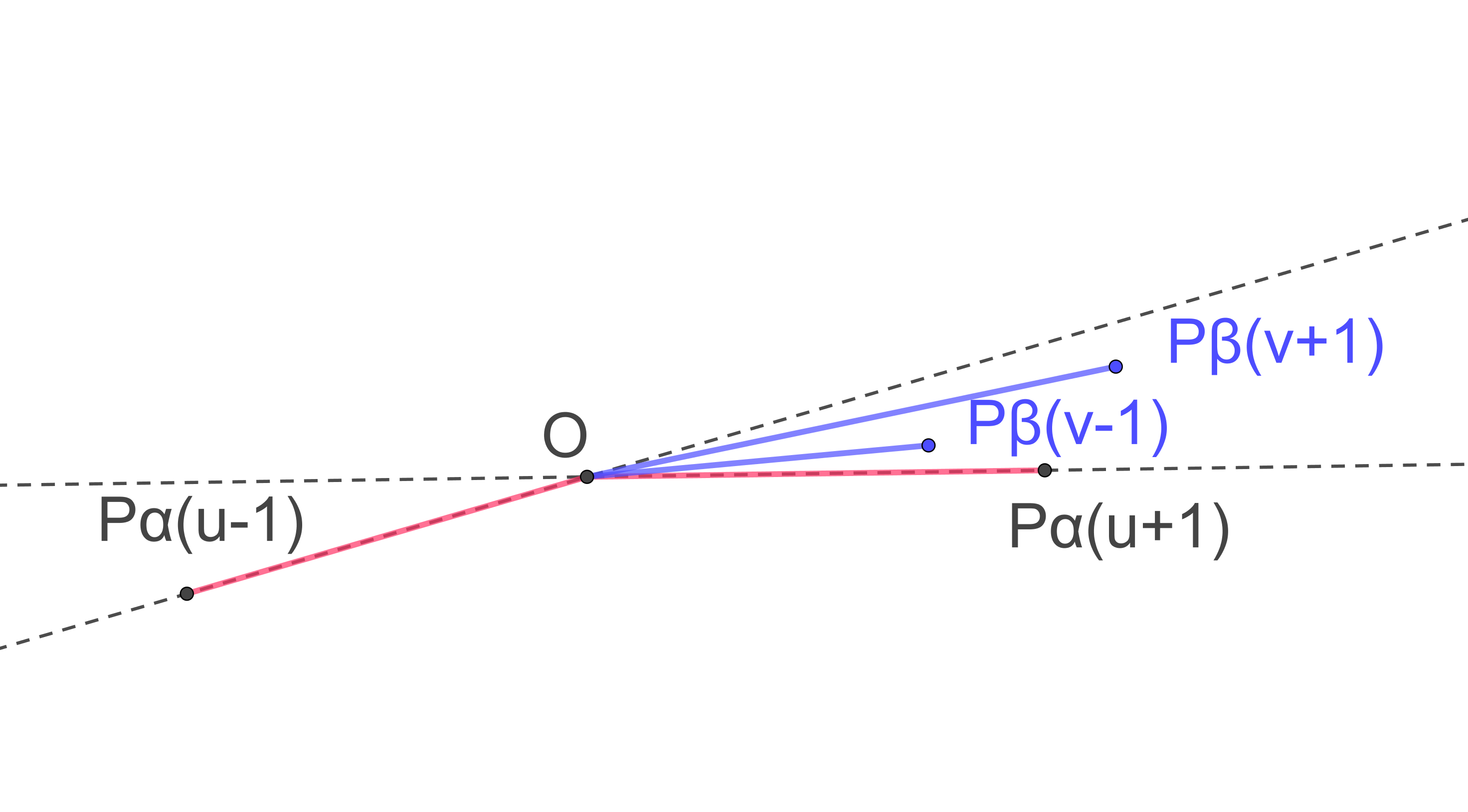}
\caption{\small Forbidden configuration of the projections of $\alpha$ and $\beta$ in $\pi$, assuming that $(u_0,v_0+\tfrac{1}{2})$ is a singular edge.}\label{Fig:DIAMSForbidden}
\end{figure}

The lines through the origin parallel to $P\alpha'(u_0\pm\tfrac{1}{2})$ divide the plane $\pi$ in four quadrants, and the one containing $P\beta(v+1)$ is forbidden for $P\beta(v-1)$. We shall refer to the quadrant opposite to the one containing $P\beta(v+1)$ as quadrant A. We call quadrant B the one such that, if $P\beta(v-1)$ is positioned in it, the polygonal lines $P\beta$ and $P\alpha$ cross at the origin. The other quadrant will called C: if $P\beta(v-1)$ is positioned in quadrant C, the polygonal lines $P\beta$ and $P\alpha$ do not cross at the origin. If we start with a different singular edge, namely $(u_0,v_0-\tfrac{1}{2})$ or $(u_0\pm\tfrac{1}{2},v_0)$, the labeling of the quadrants would be similar. 
The possible positions of $P\beta(v_0-1)$ are shown in Figure \ref{Fig:DIAMSConfiguration}. In the up-left figure, $P\beta(v-1)$ is positioned in quadrant A, in the up-right figure, $P\beta(v-1)$ 
is positioned in quadrant B, while in the down figure, $P\beta(v-1)$ is positioned in quadrant C. 

\begin{figure}[!htb]
\includegraphics[width=.4\linewidth]{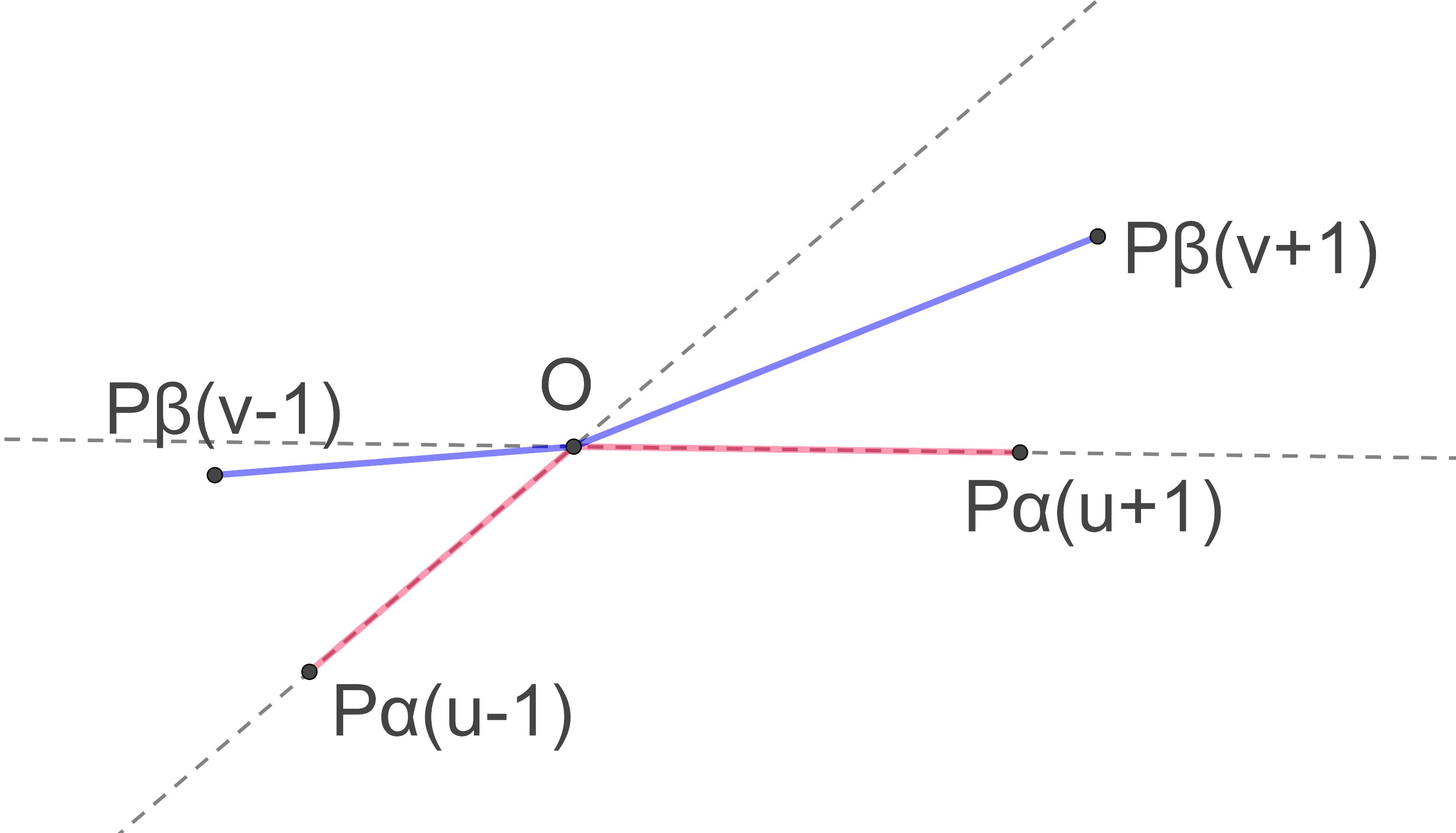}
\hfill
\includegraphics[width=.4\linewidth]{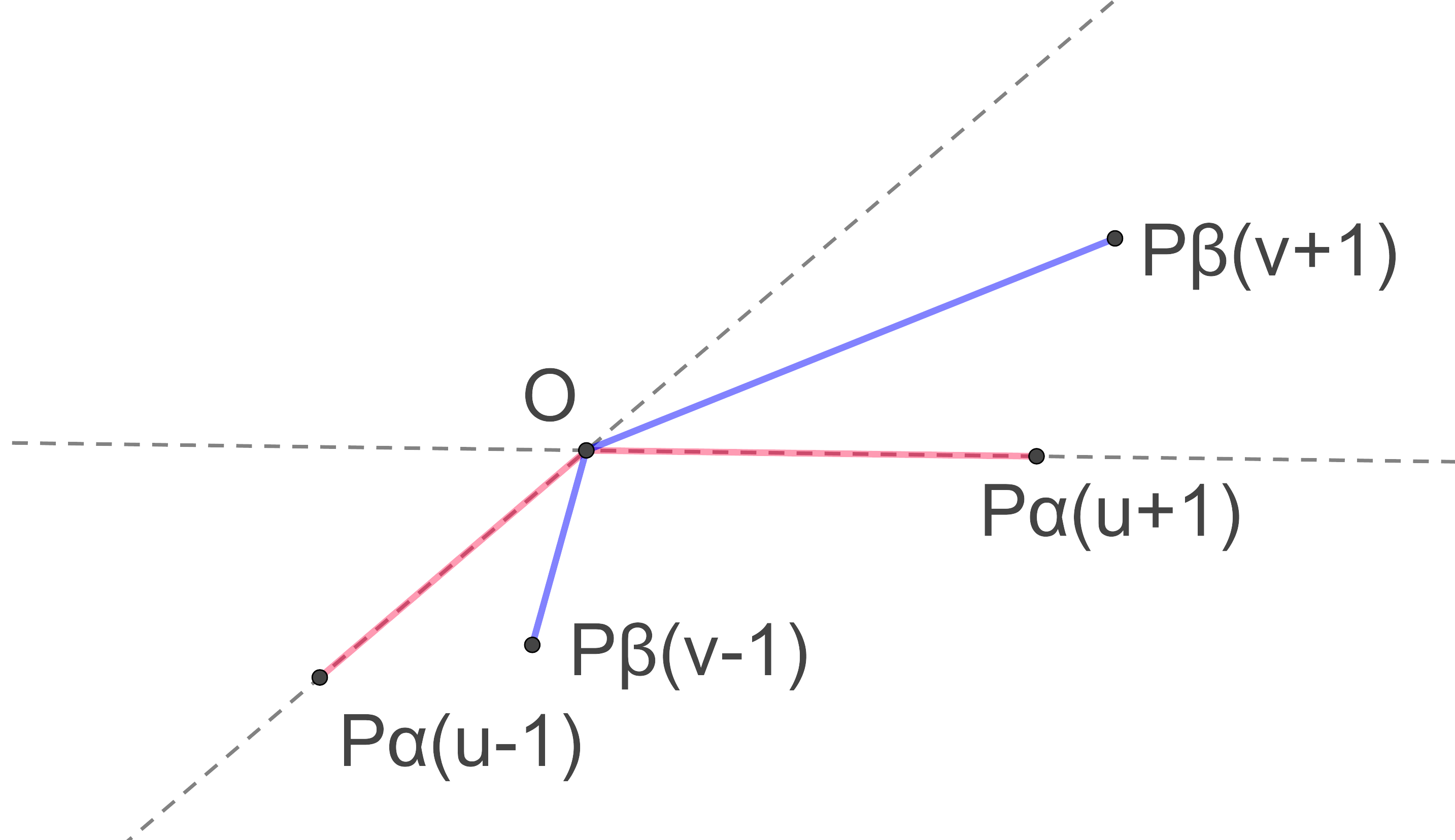}
\hfill
\includegraphics[width=.4\linewidth]{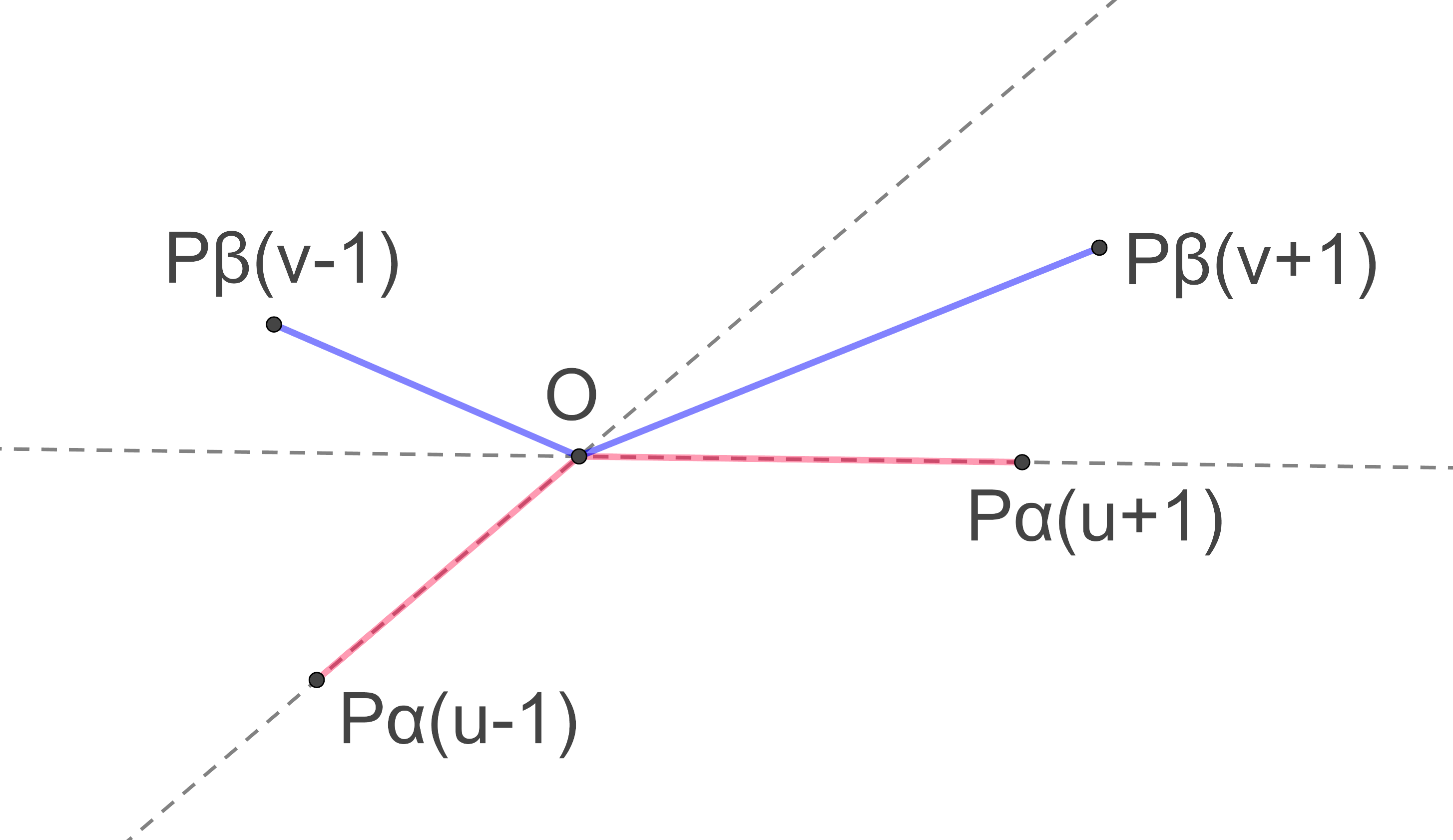}
\caption{\small Possible configurations of the projections of $\alpha$ and $\beta$ in $\pi$, assuming that $(u_0,v_0+\tfrac{1}{2})$ is a singular edge. In the up-left, up-right and down figures, $P\beta(v-1)$ is positioned in quadrant A, B or C, respectively. }\label{Fig:DIAMSConfiguration}
\end{figure}

Next proposition is a discrete counterpart of the fact that, in the smooth generic case, the singular set is a smooth curve:

\begin{prop}
In a DIAMS, the singular set is a polygonal line, i.e., each singular vertex is adjacent to exactly two singular edges.  
\end{prop}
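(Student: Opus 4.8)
The plan is to study, simultaneously, the four quadrangles and the four edges meeting at a fixed singular vertex $(u_0,v_0)$, reducing the four discrete metrics to signed areas in the plane $\pi=\pi(u_0,v_0)$. Write $N=\alpha(u_0)-\beta(v_0)$ for the common transversal direction, $P$ for the projection onto $\pi$ parallel to $N$, and abbreviate $a_\pm=P\alpha'(u_0\pm\tfrac12)$, $b_\pm=P\beta'(v_0\pm\tfrac12)$. First I would rewrite all four metrics $\Omega(u_0\pm\tfrac12,v_0\pm\tfrac12)$ on the common base $N$: by \eqref{eq:DiscreteOmega} each is a determinant $[\,\text{base},\alpha'(u_0\pm\tfrac12),\beta'(v_0\pm\tfrac12)\,]$, and in every case the base differs from $N$ only by a linear combination of the two remaining columns, so it may be replaced by $N$; projecting the other two columns then gives
\[
\Omega(u_0\pm\tfrac12,v_0\pm\tfrac12)=\left[\,N,\,a_\pm,\,b_\pm\,\right].
\]
Since $N$ and $\pi$ are fixed, $[\,N,\cdot,\cdot\,]$ is a fixed nonzero multiple of the signed area $[\,\cdot,\cdot\,]$ in $\pi$; orienting $\pi$ so that this multiple is positive, the sign of each $\Omega$ equals $s_{\pm\pm}:=\mathrm{sign}\,[\,a_\pm,b_\pm\,]$. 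The hypothesis that the four planes through $\overline{\alpha(u_0)\beta(v_0)}$ are distinct says that the lines $\mathbb{R}a_+,\mathbb{R}a_-,\mathbb{R}b_+,\mathbb{R}b_-$ are pairwise distinct, so every $[\,a_\pm,b_\pm\,]$ is nonzero and the four signs are well defined.

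Next I would read off the combinatorics from \eqref{eq:SingularvEdge}--\eqref{eq:SingularuEdge}: the up, left, down and right edges at $(u_0,v_0)$ are singular exactly when consecutive entries of the cyclic word $(s_{++},s_{-+},s_{--},s_{+-})$ disagree. Hence the number of singular edges at the vertex equals the number of sign changes around this $4$-cycle, which is always even, so $0$, $2$ or $4$. As $(u_0,v_0)$ is a singular vertex it meets at least one singular edge, so the count is at least $2$; the proposition therefore reduces to excluding the value $4$, that is, the checkerboard pattern $s_{++}=s_{--}=-s_{-+}=-s_{+-}$.

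Finally I would discard the checkerboard using the forbidden-configuration hypothesis. With the orientation fixed above, the quadrant of a vector $w$ relative to the lines $\mathbb{R}a_+,\mathbb{R}a_-$ is recorded by $(\mathrm{sign}\,[\,a_+,w\,],\mathrm{sign}\,[\,a_-,w\,])$. Since $P\beta(v_0+1)=b_+$ and $P\beta(v_0-1)=-b_-$, these two projections lie in the quadrants $(s_{++},s_{-+})$ and $(-s_{+-},-s_{--})$, respectively. In the checkerboard case these pairs coincide, so $P\beta(v_0-1)$ lands in the same quadrant as $P\beta(v_0+1)$ --- precisely the configuration excluded by hypothesis (Figure \ref{Fig:DIAMSForbidden}). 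This rules out $4$ and forces exactly two singular edges. The one step requiring care, and where I expect the only real friction, is the first: one must verify that after transporting all four metrics to the single base $N$ their signs are genuinely read with one and the same orientation of $\pi$, since it is this uniformity that lets the algebraic checkerboard condition be identified with the single forbidden quadrant picture.
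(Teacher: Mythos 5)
Your proof is correct, but it is organized differently from the paper's. The paper proves this proposition by direct enumeration: having fixed the singular edge $(u_0,v_0+\tfrac{1}{2})$, it places $P\beta(v_0-1)$ in one of the three allowed quadrants A, B, C (the fourth being excluded by the standing hypothesis) and reads off from Figure \ref{Fig:DIAMSConfiguration} that each configuration has exactly two singular edges --- recording moreover \emph{which} two, information that is reused immediately afterwards for the star configurations of Figure \ref{Fig:StarConfiguration} and for the cuspidal-edge/swallowtail dichotomy. You instead make a parity argument: the four edges at the vertex correspond to consecutive pairs in the cyclic sign word $(s_{++},s_{-+},s_{--},s_{+-})$, so the number of singular edges is even, is at least $2$ at a singular vertex, and can fail to be exactly $2$ only in the checkerboard case, which you identify precisely with the forbidden quadrant. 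All the supporting steps check out: transporting the four metrics to the common base $N$ is legitimate (e.g.\ $\alpha(u_0-1)-\beta(v_0-1)=N-\alpha'(u_0-\tfrac{1}{2})+\beta'(v_0-\tfrac{1}{2})$ differs from $N$ by a combination of the other two columns of the determinant) --- the paper states this reduction only for the two quadrangles along a single edge, so your uniform version for all four quadrangles is a genuine, and needed, extension; the four-distinct-planes hypothesis does give pairwise distinct lines $\mathbb{R}a_{\pm}$, $\mathbb{R}b_{\pm}$ and hence nonvanishing brackets; the quadrant bookkeeping $(s_{++},s_{-+})$ for $b_+=P\beta(v_0+1)$ versus $(-s_{+-},-s_{--})$ for $-b_-=P\beta(v_0-1)$ is right, and these pairs coincide exactly under the checkerboard relations; and since in the checkerboard case the edge $(u_0,v_0+\tfrac{1}{2})$ is itself singular, the forbidden-configuration hypothesis applies as stated. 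What your route buys is rigor and economy --- no appeal to figures and no enumeration of the allowed configurations; what the paper's route buys is the explicit identification of the two singular edges in each of the cases A, B, C, which the subsequent classification of singular vertices requires.
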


\begin{proof}
Observe that in configuration A, only $(u,v+\tfrac{1}{2})$ and $(u,v-\tfrac{1}{2})$ are singular edges, in configuration B, only $(u,v+\tfrac{1}{2})$ and $(u-\tfrac{1}{2},v)$ are singular edges, while in configuration C, only $(u,v+\tfrac{1}{2})$ and $(u+\tfrac{1}{2},v)$ are singular edges, which proves that the singular set is a polygonal line.
\end{proof}


Using Lelieuvre's formulas, one can obtain the star configuration at $(u,v)$ from the configurations of the projections of 
$\alpha(u\pm\tfrac{1}{2})$ and $\beta(v\pm\tfrac{1}{2})$ in the plane $\pi$. 
The A, B, C cases of Figure \ref{Fig:DIAMSConfiguration} correspond to the star configurations 
of Figure \ref{Fig:StarConfiguration}, left, middle, right, respectively.



\begin{figure}[!htb]
\includegraphics[width=.3\linewidth]{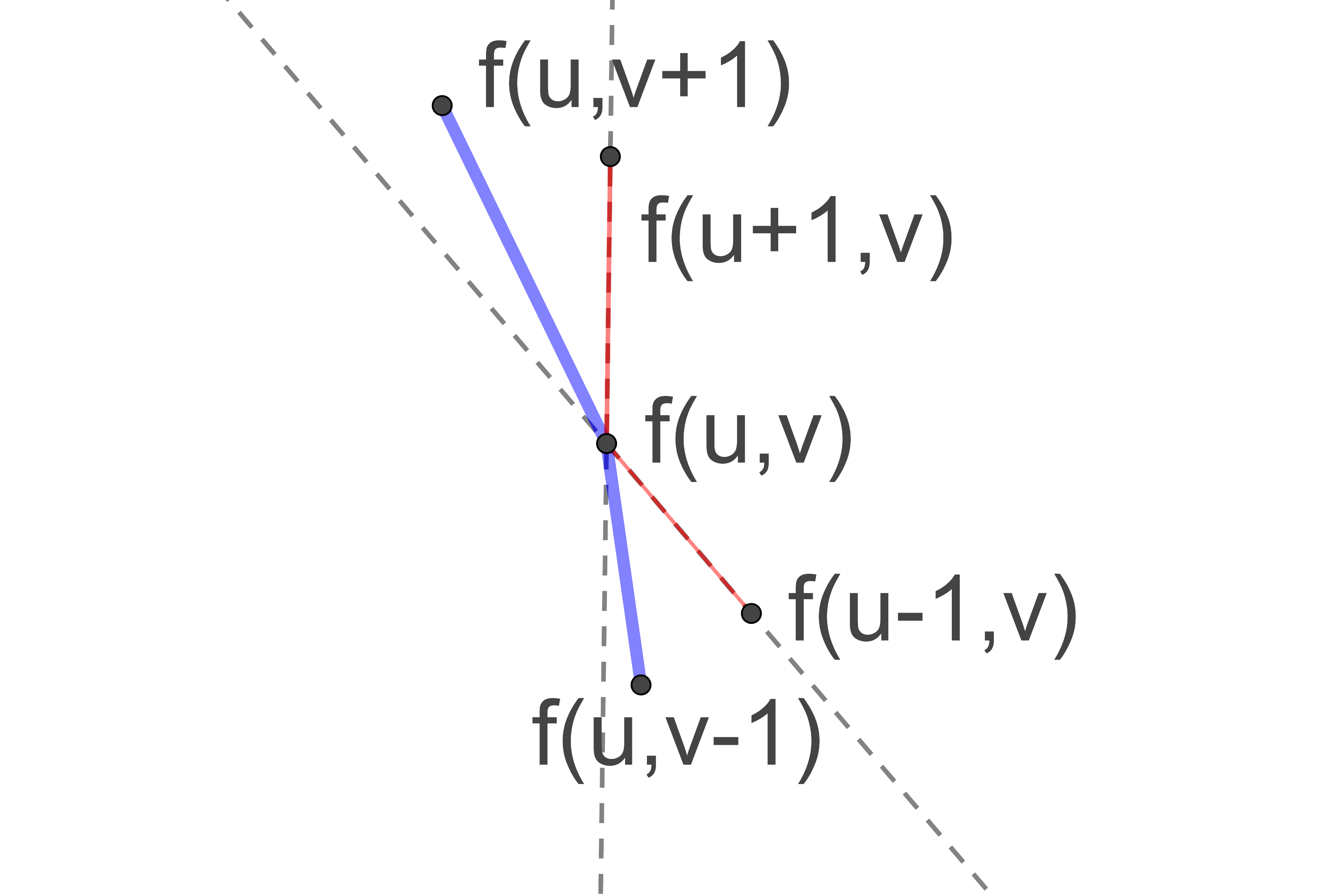}
\hfill
\includegraphics[width=.3\linewidth]{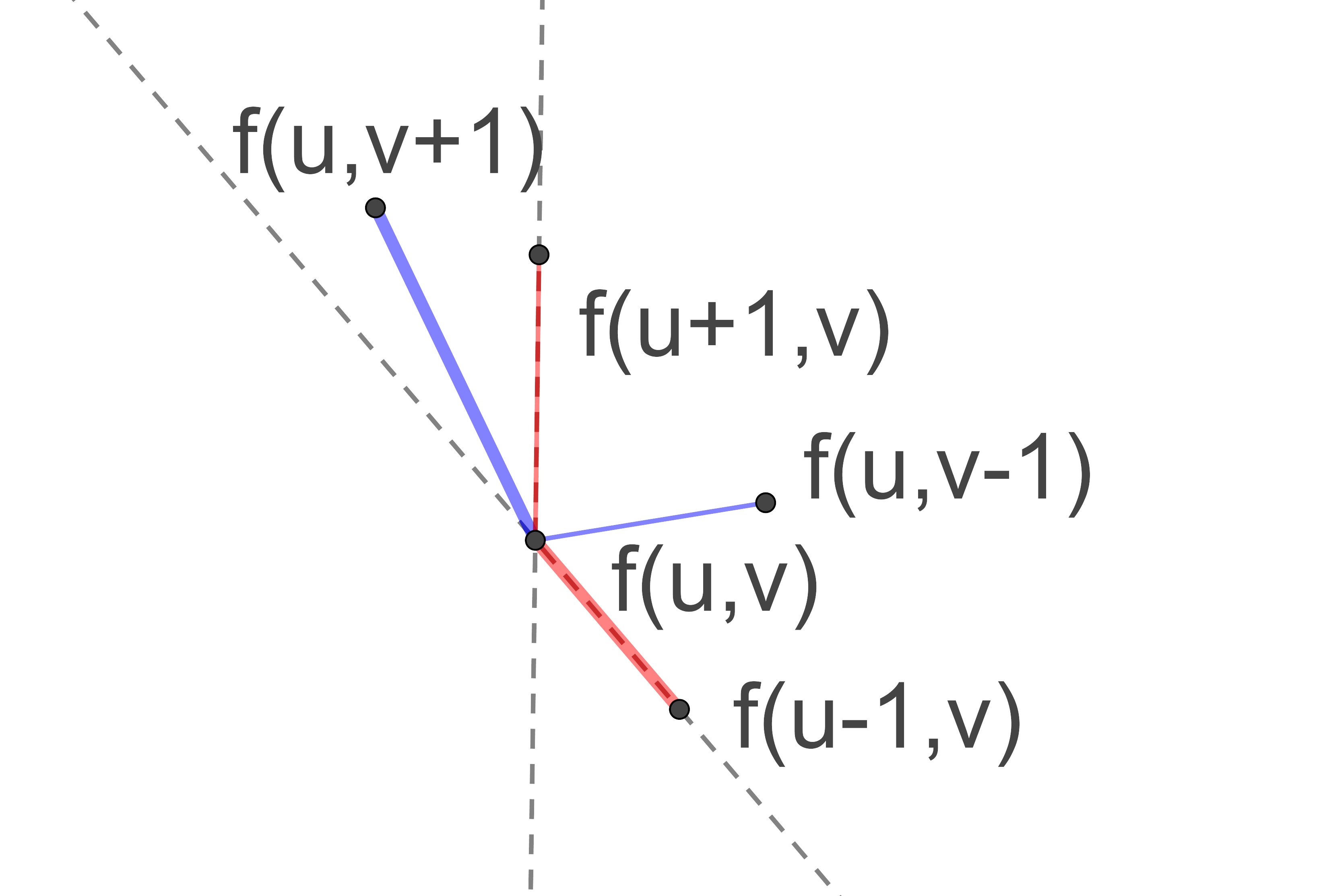}
\hfill
\includegraphics[width=.3\linewidth]{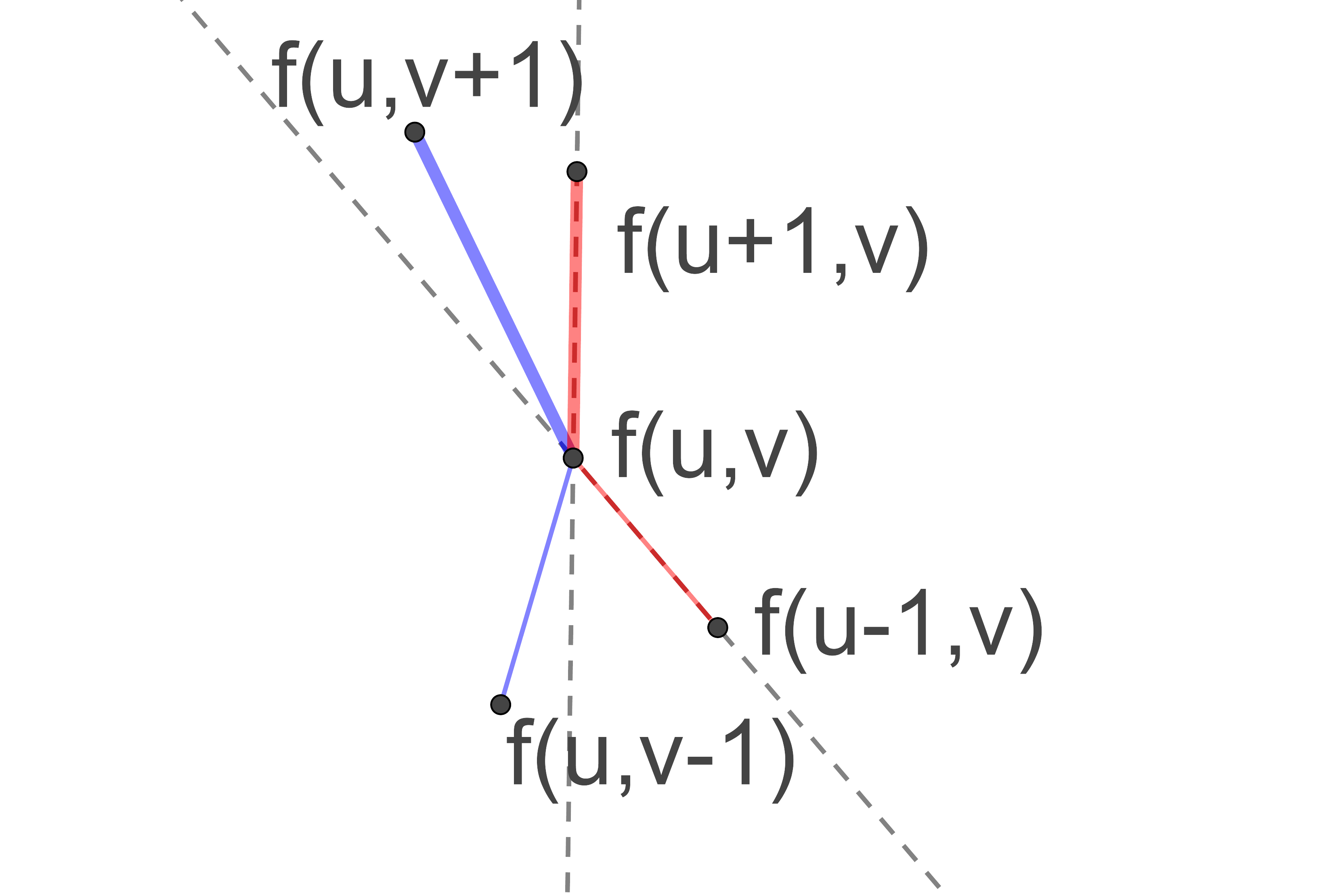}
\caption{\small Star configurations of cases A (left), B (middle) and C (right). Thick full segments correspond to singular edges.}\label{Fig:StarConfiguration}
\end{figure}

\subsection{Types of the singular vertices}

In this section we distinguish between vertices that we should consider as cuspidal from those that we should consider swallowtails. For this, we shall propose discrete counterparts of some properties of the swallowtail points of a smooth IAMS. 

\begin{enumerate}

\item
In the smooth case, a characteristic property of a swallowtail is that $P\alpha$ and $RP\beta$ crosses at $(u_0,v_0)$ (Proposition \ref{prop:SmoothSwallow} (d)), where R denotes the reflection in the plane $\pi$ with respect to the origin. Among configurations A, B and C of Figure \ref{Fig:DIAMSConfiguration}, only C present this property. 

\item
Another characteristic property of a smooth swallowtail is that, assuming $P\alpha'(u_0)$ and $P\beta'(v_0)$ are pointing to the same direction, the function $v(u)$, or $u(v)$, along the singular curve is strictly decreasing (Proposition \ref{prop:SmoothSwallow} (b) with $\lambda<0$). The discrete counterpart of this property is that, along the singular curve, the $u$ and $v$ edges are traversed in opposite directions when passing through $(u_0,v_0)$. Among  configurations A, B and C of Figure \ref{Fig:DIAMSConfiguration}, only C present this property.

\item
In the smooth case, the singular curve at the surface is regular in the case of a cuspidal edge and has a cusp in the case of a swallowtail. Moreover, a planar curve traverse a transversal curve at a regular point but remains in the same side at a cusp point. 
In the discrete case, the only transversal curves that we have at hand are the $u$ and $v$ curves. The discrete counterpart of this property is that both singular edges are in the same side of the lines determined by the other edges. Among the star configurations A, B and C of Figure \ref{Fig:StarConfiguration}, only C present this property. 

\end{enumerate}

We have thus proved the following proposition:

\begin{prop}\label{prop:DiscreteSwallow}
Let $(u_0,v_0)$ be a singular vertex of a DIAMS. Denote by $P$ the projection in a plane $\pi$ transversal to $\alpha(u_0)-\beta(v_0)$ parallel to this vector, and by $R:\pi\to\pi$ the reflection with respect to the origin. The following statements are equivalent:
\begin{enumerate}
\item
$P\alpha$ and $RP\beta$ crosses at $(u_0,v_0)$.

\item
Along the singular curve, the $u$ and $v$ edges are traversed in opposite directions when passing through $(u_0,v_0)$.

\item\label{prop3}
Both singular edges are in the same side of the lines determined by the other edges.
\end{enumerate}
\end{prop}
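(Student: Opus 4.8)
The plan is to establish the equivalence of the three statements by systematically analyzing the three geometric configurations A, B, and C introduced in Figure \ref{Fig:DIAMSConfiguration}. Since the preceding discussion has already classified every singular vertex into exactly one of these three configurations (based on the position of $P\beta(v_0-1)$ relative to the quadrants determined by the lines parallel to $P\alpha'(u_0\pm\tfrac{1}{2})$), the cleanest strategy is to verify that each of the three stated conditions holds in configuration C and fails in configurations A and B. Once this is done, all three conditions are simultaneously true (in case C) or simultaneously false (in cases A and B), which yields the desired equivalence.

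First I would treat condition (1). By the definition of quadrant C given in the text, if $P\beta(v_0-1)$ lies in quadrant C then the polygonal lines $P\beta$ and $P\alpha$ do \emph{not} cross at the origin; applying the reflection $R$ then shows that $P\alpha$ and $RP\beta$ \emph{do} cross at $(u_0,v_0)$. The labeling was in fact chosen precisely so that this crossing behavior distinguishes the quadrants, so condition (1) holds exactly in case C. I would make this explicit by noting that $R$ sends the quadrant containing $P\beta(v_0+1)$ to quadrant A, and tracking how the crossing-at-origin property for $P\beta$ transforms into the crossing-at-$(u_0,v_0)$ property for $RP\beta$.

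Next I would handle condition (2) using the star configurations of Figure \ref{Fig:StarConfiguration}. The correspondence between the projection configurations A, B, C and the star configurations (left, middle, right) is established via Lelieuvre's formulas in the paragraph preceding the proposition. I would read off the orientation in which the two singular edges are traversed as one passes through $(u_0,v_0)$ along the singular polygonal line, and observe that only in configuration C are the $u$-edge and $v$-edge traversed in opposite directions (in cases A and B they are traversed consistently). For condition (3), I would again appeal to the star configurations: the singular set is a polygonal line through $(u_0,v_0)$ whose two singular edges must be compared against the lines spanned by the two remaining (non-singular) edges of the star, and I would verify directly from Figure \ref{Fig:StarConfiguration} that both singular edges lie on the same side of those lines only in configuration C.

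The main obstacle is that the argument is essentially a careful case-by-case reading of the figures rather than a self-contained computation, so the burden is to make the geometric assertions precise enough to be convincing without the reader having the figures in front of them. In particular, condition (3) depends on correctly interpreting the planarity of each star and the relative positions of the four edges, which is where I would be most careful. I would anchor each claim to the explicit determinant sign conditions \eqref{eq:SingularvEdge} and \eqref{eq:SingularuEdge} defining singular edges, and to the quadrant definitions, so that the reader can reconstruct each configuration from the sign data alone and thereby confirm that conditions (1), (2), and (3) toggle together precisely at the transition into quadrant C.
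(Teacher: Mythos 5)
Your proposal is correct and follows essentially the same route as the paper: the text preceding the proposition establishes the trichotomy of configurations A, B, C (via the quadrant position of $P\beta(v_0-1)$ and the corresponding star configurations of Figure \ref{Fig:StarConfiguration}), and the paper's proof consists precisely of verifying, as you do, that each of the three conditions holds in configuration C and fails in A and B, so that all three toggle together. Your added care in anchoring the figure-reading to the sign conditions \eqref{eq:SingularvEdge} and \eqref{eq:SingularuEdge} and in tracking the reflection $R$ explicitly is a reasonable sharpening, but it does not change the argument.
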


From the above proposition, the distinction between ``discrete swallowtails'' and ``discrete cuspidal edges'' becomes clear.
The following definition is a generalization of the case of improper affine spheres (\cite{Craizer2023}):

\begin{defn}
Consider a DIAMS. A singular vertex $(u_0,v_0)$ is called a {\it discrete swallowtail} if it satisfies one, and hence all, of the properties of Proposition \ref{prop:DiscreteSwallow}. If this is not the case, the vertex is called a {\it discrete cuspidal edge}.
\end{defn}

\begin{rem}
The definition of singular edges and Proposition \ref{prop:DiscreteSwallow}(\ref{prop3}) refer only to asymptotic net, not to the DIAMS itself. So the definitions of this section may be used for other types of discrete asymptotic nets not coming from a DIAMS. 
\end{rem}

In next example we consider DIAMS with four quadrangles. The bilinear interpolation is used to visually reinforce
the difference between a singular vertex that is a cuspidal edge from a singular vertex that is a swallowtail.

\begin{example}\label{ex:1}
Consider the polygonal lines $\alpha,\beta:\{-1,0,1\}\to\mathbb{R}^3$ given by
$$
\alpha(-1)=\alpha(0)+(-1,-1,1)\Delta u,\ \ \alpha(0)=(0,0,1),\ \ \alpha(1)=\alpha(0)+(1,0,0)\Delta u,
$$
$$
\beta(-1)=(-1,y,0)\Delta v,\ \ \beta(0)=(0,0,0),\ \ \beta(1)=(2,1,0)\Delta v.
$$
with $\Delta u=\Delta v=0.1$. We consider three different values for $y$: (A) $y=-1/2$, \  (B) $y=-2$, \ (C) $y=1$. 
In Figure \ref{Fig:Examples}, one can see the corresponding DIAMS with a bilinear interpolation in each quadrangle. 
\end{example}

\begin{figure}[!htb]
\includegraphics[width=.3\linewidth]{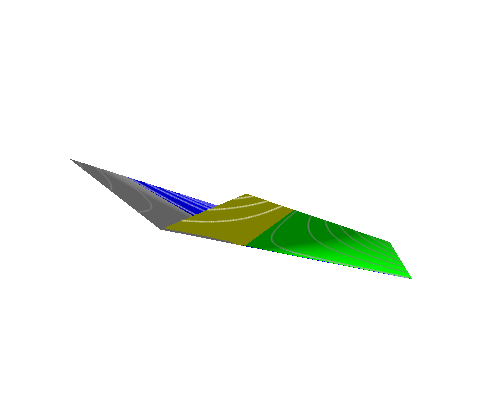}
\hfill
\includegraphics[width=.3\linewidth]{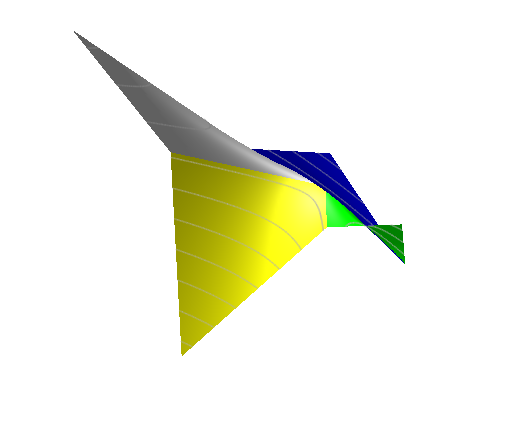}
\hfill
\includegraphics[width=.3\linewidth]{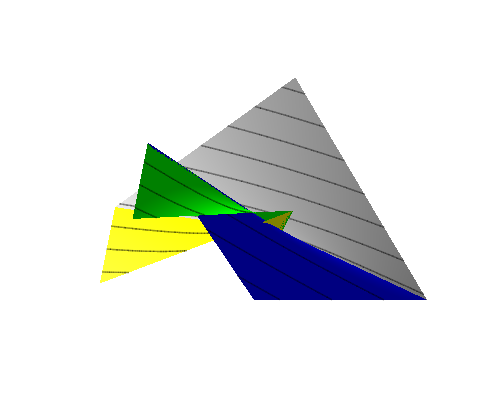}
\caption{\small Example \ref{ex:1} with three different values for $y$, corresponding to configurations A, B and C. The different colors correspond to the bilinear interpolations at the four quadrangles. }\label{Fig:Examples}
\end{figure}

\bibliographystyle{amsplain}

\end{document}